\newtheorem{theo}{Theorem}
\newtheorem{proposition}{Proposition}
\newtheorem{lme}{Lemma}
\newtheorem{coro}{Corollary}
\newtheorem{defi}{Definition}
\newtheorem{rema}{Remark}
\newcommand{\R}{\mathbb{R}}
\newcommand{\N}{\mathbb{N}}
\newcommand{\A}{\mathbb{A}}
\newcommand{\calS}{\mathcal{S}}
\newcommand{\prob}{\mathbf{P}}
\newcommand{\proba}{\mathbb{P}}
\newcommand{\V}[1]{\mathbb{V}\left( #1 \right)}
\newcommand{\E}[1]{\mathbb{E}\left[ #1 \right]}
\newcommand{\Cov}[2]{{\normalfont \textrm{Cov}}\left( #1 , #2 \right)}
\newcommand{\Card}[1]{\left\lvert #1 \right\rvert}
\newcommand{\diffCard}[2]{\Card{#1} - \Card{#2}}
\newcommand{\pset}[1]{\mathcal{P}\left(#1\right)}
\newcommand{\inprodL}[1]{\left\langle #1 \right\rangle_{L^2}}
\newcommand{\normL}[2]{\left\lVert #1 \right \rVert_{#2}}
\newcommand{\krnl}[2]{k\left( #1, #2 \right)}
\newcommand{\SMMD}{S^{\text{MMD}}}
\newcommand{\ie}{i.e.,~}
\newcommand{\cororef}[1]{Corollary~\ref{#1}}
\newcommand{\defref}[1]{Definition~\ref{#1}}
\newcommand{\propref}[1]{Proposition~\ref{#1}}
\newcommand{\lmeref}[1]{Lemma~\ref{#1}}
\def\ps@pprintTitle{%
\let\@oddhead\@empty
\let\@evenhead\@empty
\def\@oddfoot{}%
\let\@evenfoot\@oddfoot}
\begin{document}

\begin{frontmatter}

%%%%%%%%%%%%%%%%%%%%%%%%%%%%%%%%
% Title & runtitle
\title{On the coalitional decomposition of parameters of interest}

%%%%%%%%%%%%%%%%%%%%%%%%%%%%%%%%
% Authors information
\author[a,b,c,e]{Marouane Il Idrissi}
\author[a,b,d]{Nicolas Bousquet}
\author[c]{Fabrice Gamboa}
\author[a,b,c]{Bertrand Iooss}
\author[c]{Jean-Michel Loubes}

\address[a]{EDF Lab Chatou, 6 Quai Watier, 78401 Chatou, France}
\address[b]{SINCLAIR AI Lab., Saclay, France}
\address[c]{Institut de Mathématiques de Toulouse, 31062 Toulouse, France}
\address[d]{Sorbonne Université, LPSM, 4 place Jussieu, Paris, France}
\address[e]{Corresponding Author - Email: marouane.il-idrissi@edf.fr}

\begin{abstract}
  Understanding the behavior of a black-box model with probabilistic inputs can be based on the decomposition of a parameter of interest (e.g., its variance) into contributions attributed to each coalition of inputs (i.e., subsets of inputs). In this paper, we produce conditions for obtaining unambiguous and interpretable decompositions of very general parameters of interest. This allows to recover known decompositions, holding under weaker assumptions than stated in the literature.
\end{abstract}

\begin{keyword}
interpretability \sep sensitivity analysis \sep combinatorics \sep probability theory \sep statistics
\end{keyword}

\end{frontmatter}

%%%%%%%%%%%%%%%%%%%%%%%%%%%%%%%%%%%%%%%%%%%%%%%%%%%%%%%%%%%%%%%%%%%%%%%%%%
%% INTRODUCTION & DEFINITIONS
\section{Introduction and preliminaries}

The decomposition of a parameter of interest, also known as a quantity of interest (QoI) in the uncertainty quantification framework, with respect to (w.r.t.) coalitions of covariables is crucial in both the field of sensitivity analysis of numerical models and in explainable artificial intelligence \cite{Iooss2022}. These decompositions allow to distribute shares of QoI to the inputs of an input-output black-box model. Depending on the QoI, they both allow to better understand the behavior of such models, and to perform post-hoc interpretability \cite{Barredo2020}.

For instance, the well-known Hoeffding-Sobol' decomposition is a particular instance of output variance decomposition, which has been used for both settings \cite{davgam21, Fel2021, Benesse2022}. It relies on a unique decomposition of an input-output model in $L^2$. Nevetheless, it requires independent covariables \cite{Hoeffding1948}, but allows to quantify the influence (in terms of percentages of output variance) of each inputs of a black-box model, as well as interaction influence due to coalitions of inputs.

In this paper, the concept of ``coalitional decomposition of QoI'' is developped, generalizing the idea of the Hoeffding-Sobol' variance decomposition to other types of QoIs, leveraging results from the field of combinatorics. In particular, Rota's extension of the Möbius inversion formula to partially ordered sets \cite{CombiRota2012}). Necessary conditions are presented in order to define coalitional decompositions of abstract QoIs. It is shown, among other QoI decompositions proposed in the litterature, that the Hoeffding-Sobol' decomposition still holds without the need for independent inputs, but its interpretation as interaction effects holds only when input independence is assumed. Furthermore, a quite general point of view is adopted, allowing to define decompositions for a large variety of QoIs.
%%%%%%%%%%%%%%%%%%%%%%%%%%%%%%%%%%%%%%%%%%
%% INGREDIENTS
\subsection{Notations and tools}
\subsubsection{Inputs, model and outputs}
Let $(\Omega, \mathcal{F}, \prob)$ be some probability space. Let, for $i=1,\dots, d$, $d \in \N^*$, $\left(E_i, \mathcal{B}(E_i)\right)$ be abstract polish measurable space, i.e., $E_i$ is a separable completely metrizable topological space, and $\mathcal{B}(E_i)$ denotes its associated Borel $\sigma$-algebra. Let $D=\{1,\dots,d\}$ and denote by $\pset{D}$ its power-set (i.e., the set of all possible subsets of $D$, including $\emptyset$). For any $A \subseteq D$, denote the \emph{marginal} measurable spaces $\left(E_A, \mathcal{E}_A \right)$, where
$$\quad E_A = \bigtimes_{i \in A} E_i, \quad \mathcal{E}_A = \bigotimes_{i\in A} \mathcal{B}(E_i) = \mathcal{B}\left( \bigtimes_{i \in A} E_i\right),$$
Set also $(E, \mathcal{E}) := (E_D, \mathcal{E}_D)$. Let $X = (X_1, \dots, X_d)^\top$ be an $E$-valued random vector (i.e., a measurable function $X : \Omega \rightarrow E$), referred to as the \emph{inputs}. Let $P_X$ be the distribution of the inputs. Define the marginal distributions, for each $A \subset D$, as:
$$P_{X_A} = \int_{E_{\overline{A}}} dP_X,$$
where $X_A=(X_i)_{i\in A}$ is the coalition of inputs whose indices are in $A$ (i.e., the subset $X_A$ of $X$). Further, $\overline{A}$ denotes the complementary set of $A$ in $D$ (\ie $\overline{A} = D \setminus A$). Additionally, for every $A \subset D$, the conditional distributions $P_{X_A \mid X_{\overline{A}}}$ are assumed to be regular, and if not uniquely defined, they are chosen to be regular (see \cite{Breiman1992}, Chap. 4).

Let $G: E \rightarrow Z$ be an measurable function. Here $Z$ denotes an abstract polish space. $G(X)$ is the $Z$-valued random variable, resulting from the composition of $G$ with $X$. In the following, the function $G$ is referred to as a \emph{model}, meanwhile $G(X)$ is referred to as the \emph{output} of the model. Denote $\proba(E)$ the set of all probability distributions on $(E, \mathcal{E})$. $\mathcal{M}(E)$ denotes the set of $Z$-valued models, i.e., every $Z$-valued, measurable functions. 

\begin{rema}
In essence, the random inputs $X$, and the output $G(X)$ are not restricted to be real-valued, but can be defined on more complex measurable spaces (e.g., images, functions, stochastic processes).
%, as long as it is possible to formally define probability measures on such objects (hence the polish condition on $E$).
\end{rema}

A particular subset of $\mathcal{M}(E)$ is of interest in the present work whenever $Z = \R$: $L^2(P_X, \R)$. It is the set of measurable, $\R$-valued functions which are square-integrable against $P_X$. Recall that $L^2(P_X; \R)$ is a Hilbert space with the inner product:
$$\forall f,g \in L^2(P_X; \R), \quad \inprodL{f,g} = \int_E f(x)g(x) dP_X(x),$$
and associated norm:
$$\forall f \in L^2(P_X; \R), \quad \normL{f}{L^2}^2 = \int_E f^2(x) dP_X(x).$$
Denote, for any $A \subset D$, $L^2\left(P_{X_A}; \R \right)$ the Hilbert subspaces of $L^2(P_X ; \R)$, of square integrable, $\mathcal{E}_A$-measurable functions. In other words, any $f \in L^2(P_{X_A}, \R)$ is a square-integrable function $f : E_A \rightarrow \R$: elements of $L^2(P_{X_A}, \R)$ only take $|A|:=\textrm{card}(A)$ inputs. Whenever $Z = \R^k$ for a positive integer $k$, one can also define the set $L^2\left(P_X; \R^k\right)$ accordingly (see \cite{Gamboa2013}).

\subsubsection{Some elements of combinatorics and abstract algebra}
A \emph{partially ordered set} (poset) is defined as a pair $(\calS, \leq)$ where $\mathcal{S}$ is a non-empty set, and $\leq$ is a partial order binary relation on elements of $\calS$. A poset $(\calS, \leq)$ is said to be \emph{locally finite} if, for any $x,z \in \calS$, the sets $\{y \in \calS : x \leq y \leq z \}$ (also called \emph{segments} of $\mathcal{S}$) are finite.

A \emph{commutative ring with identity}, is a triplet $(\A, +, \times)$ where $\A$ is a non-empty set, and where $+$ and $\times$ are addition and multiplication operators respectively, which are both associative and commutative on $\A$, $\times$ is distributive w.r.t. $+$ on $\A$, $\A$ contains both an additive and multiplicative identity, but only an additive inverse. A commutative ring with identity that admits a multiplicative inverse is generally called a \emph{field}. In the following, abstract commutative rings with identity are denoted $\A$, and are assumed to be endowed with the usual addition and multiplication operator, unless stated otherwise. For instance, $\R$ is a commutative ring with identity (it is in fact, a field).

Denote $I_{\A}(\calS)$ the incidence algebra of a locally finite poset $(\calS, \leq)$ over a commutative ring with identity $\A$, i.e., the set of functions $f:\calS \times \calS \rightarrow \A$ such that $f(x,y)=0$ if $x \not \leq y$ (see \cite{Spiegel1997}, Definition~1.2.1 p.10). $(I_{\A}(\calS), +, *)$ forms an $\A$-algebra with the usual pointwise addition $+$ and the usual convolution $*$, i.e., for any $f,g \in I_{\A}(\calS)$, and any $x,z \in \mathcal{S}$ such that the segment $\{y \in \mathcal{S} : x \leq y \leq z\}$ is non-empty,
$$(f*g)(x,z) = \sum_{x \leq y \leq z}f(x,y)g(y,z).$$
The zeta function $\zeta \in I_{\A}(\calS)$ is convolutional identity of the incidence algebra, and is defined as, $\forall x,y \in \calS$:
$$\zeta(x,y) = \begin{cases} 
1 & \text{if } x=y,\\
0 & \text{otherwise.}
\end{cases}$$
The Möbius function, denoted $\mu \in I_{\A}(\calS)$, in the case of locally finite posets $\calS$, is defined as the \emph{inverse of the zeta function for the convolution operator} defined on the incidence algebra of $\calS$, and can be computed recursively, for any $x,y \in \calS$ with $x \leq y$, as \cite{Kock2020}
$$\mu(x,y)=\begin{cases}1 & \text{if } x=y \\ \displaystyle - \sum_{x \leq z < y} \mu(x,z)  & \text{otherwise.}\end{cases}$$

Finally, in the scope of this work, it is important to note that, for the finite set $D$, the pair $(\pset{D}, \subseteq)$ where $\subseteq$ denotes the inclusion between sets, forms an locally finite poset.

%%%%%%%%%%%%%%%%%%%%%%%%%%%%%%%%%%%%%%%%%%
%% FORMAL QoI DEFINITION
\subsection{Quantity of interest}

A QoI (or parameter of interest) is the mapping of a model $G \in \mathcal{M}(E)$ and an input distribution $P_X \in \proba(E)$ to a commutative ring with identity $\A$. They can be formally defined as follows:
\begin{defi}[Quantity of interest]
An $\A$-valued QoI on a model $G$ with random inputs $X \sim P_X$, is an application:
\begin{align*}
    \phi : \proba(E) \times \mathcal{M}(E)  &\rightarrow \A \\
    P \times H  &\mapsto \phi_{P}(H).
\end{align*}
onto $G$ and $P_X$, i.e., $\phi_{P_X}(G)$.
\end{defi}
Whenever $Z = \A = \R$, for inputs $X \sim P_X$ and a model $G \in L^2(P_X; \R)$, an example of a QoI on $G$ and $X$ can be the variance of the output:
\begin{align*}
    \phi_{P_X}(G) &= \int_E \left( G(x) - \int_{E} G(t) dP_X(t) \right)^2 dP_X(x) \\
    &= \normL{G - \E{G(X)}}{L^2}^2 \\
    &= \V{G(X)}
\end{align*}
Other examples of QoIs on $G$ can be its generalized moments w.r.t. $X$, the probability that $G(X)$ exceeds a fixed threshold, or a quantile of $G(X)$ given a certain level. This definition of a QoI is very general on purpose. In essence, QoIs can also be random variables. However, for the sake of simplicity, in the remainder of this work, it is assumed that for any model $G$ with inputs $X \sim P_X$, $\phi_{P_X}(G)$ is not random.

%%%%%%%%%%%%%%%%%%%%%%%%%%%%%%%%%%%%%%%%%%
%% MÖBIUS INVERSION FORMULA & INCLU-EXCLU
\subsection{Möbius inversion formula and the Inclusion-Exclusion principle}

Originally, the ``classic'' Möbius inversion formula has been first discovered in the field of number theory by \cite{Mobius1832}. It provides a particular relation between pairs of arithmetic functions (i.e., defined on the natural numbers). This result has since been extended to locally finite posets, and became one of the main foundational result in the field of combinatorics \cite{Rota1964}. This extension, as stated in \cite{CombiRota2012} (Section~3.1.2 p.108) writes as follows:
\begin{theo}[Möbius inversion formula on locally finite posets]
Let $\calS$ be any non-empty set and $(\calS, \leq)$ form a locally finite poset, where $\leq$ is a binary relation. Let $\varphi$ and $\psi$ be functions from $\calS$ to $\A$. Then, the following equivalence hold:
$$\varphi(x) = \sum_{y:y \leq x} \psi(y), \quad \forall x \in \calS \quad \iff \quad \psi(x) = \sum_{y:y\leq x} \varphi(y) \mu(y,x), \quad \forall x \in \calS.$$
where $\mu$ is the Möbius function.
\end{theo}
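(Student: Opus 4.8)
The plan is to work inside the incidence algebra $I_{\A}(\calS)$ introduced above, reducing the claimed equivalence to the characterizing convolution identities of the Möbius function. The starting observation is that the recursive definition of $\mu$ is equivalent to the single identity, valid for all $x \leq y$ in $\calS$,
$$\sum_{x \leq z \leq y} \mu(x,z) \;=\; \zeta(x,y) \;=\; \begin{cases} 1 & \text{if } x = y,\\ 0 & \text{if } x < y,\end{cases}$$
the case $x<y$ being just a rearrangement of $\mu(x,y) = -\sum_{x \leq z < y}\mu(x,z)$. Writing $\eta \in I_{\A}(\calS)$ for the order indicator, $\eta(x,y)=1$ if $x\leq y$ and $0$ otherwise, this says $\mu * \eta = \zeta$. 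One caveat on the statement as phrased: for the sums $\sum_{y\leq x}(\cdot)$ in the theorem to make sense one must also assume each principal down-set $\{y\in\calS : y\leq x\}$ is finite (which is automatic for $\calS=\pset{D}$, the only case used later); granting this, local finiteness makes all the double sums below finite, so every rearrangement of summation is legitimate.

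For ``$\Leftarrow$'', assume $\psi(x)=\sum_{y\leq x}\varphi(y)\mu(y,x)$ for all $x$. Plugging this in and exchanging the finite order of summation over $\{(y,z):z\leq y\leq x\}$,
$$\sum_{y\leq x}\psi(y) = \sum_{y\leq x}\sum_{z\leq y}\varphi(z)\mu(z,y) = \sum_{z\leq x}\varphi(z)\sum_{z\leq y\leq x}\mu(z,y) = \sum_{z\leq x}\varphi(z)\,\zeta(z,x) = \varphi(x),$$
the third equality being the identity above with $(x,y)$ replaced by $(z,x)$. The direction ``$\Rightarrow$'' is the same computation transposed: assuming $\varphi(x)=\sum_{y\leq x}\psi(y)$,
$$\sum_{y\leq x}\varphi(y)\mu(y,x) = \sum_{y\leq x}\mu(y,x)\sum_{z\leq y}\psi(z) = \sum_{z\leq x}\psi(z)\sum_{z\leq y\leq x}\mu(y,x),$$
so the proof reduces to the ``column'' identity $\sum_{z\leq y\leq x}\mu(y,x)=\zeta(z,x)$, equivalently $\eta * \mu = \zeta$, after which the right-hand side collapses to $\sum_{z\leq x}\psi(z)\,\zeta(z,x)=\psi(x)$.

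The one genuine obstacle is this column identity: because $\mu$ is defined by a recursion on its \emph{first} argument, only $\mu*\eta=\zeta$ is immediate, and one must separately check that $\mu$ is a \emph{two-sided} convolution inverse of $\eta$. I would do this by introducing the dual recursion $\widetilde{\mu}(x,x)=1$ and $\widetilde{\mu}(x,y)=-\sum_{x<z\leq y}\widetilde{\mu}(z,y)$ for $x<y$; the computation dual to the one above gives $\eta*\widetilde{\mu}=\zeta$. Associativity of the convolution then forces $\mu = \mu*\zeta = \mu*(\eta*\widetilde{\mu}) = (\mu*\eta)*\widetilde{\mu} = \zeta*\widetilde{\mu} = \widetilde{\mu}$, whence $\eta*\mu=\eta*\widetilde{\mu}=\zeta$, which is exactly the column identity, and ``$\Rightarrow$'' is complete. (Should one wish to bypass the dual recursion, the column identity can also be obtained directly by induction on the size of the finite segment $\{y : z\leq y\leq x\}$.) This settles the equivalence.
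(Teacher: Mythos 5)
The paper does not actually prove this theorem: it is imported by citation (Rota's extension of Möbius inversion, quoted from the combinatorics literature), so there is no internal proof to compare against. Your argument is the standard incidence-algebra proof of that cited result, and it is correct: the recursion defining $\mu$ is exactly the ``row'' identity $\mu * \eta = \zeta$ (in the paper's convention where $\zeta$ denotes the convolutional identity and $\eta$ the order indicator), the direction ``$\Leftarrow$'' follows by a finite interchange of summation, and you rightly isolate the only nontrivial point in ``$\Rightarrow$'', namely that $\mu$ is also a \emph{right} factor in $\eta * \mu = \zeta$, which you settle by constructing the dual-recursion inverse $\widetilde{\mu}$ and invoking associativity to conclude $\mu = \widetilde{\mu}$; this left-inverse-equals-right-inverse step is precisely what a careless proof would gloss over. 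Two further points in your favour: your observation that local finiteness alone does not make the sums $\sum_{y \leq x}$ meaningful, and that one needs finite principal down-sets (automatic for the only poset the paper uses, $(\pset{D}, \subseteq)$), identifies a genuine imprecision in the statement as transcribed in the paper; and you kept consistently to the paper's nonstandard naming of $\zeta$, avoiding a notational clash. In short, the proposal is complete and supplies a proof the paper leaves to the references, at the cost only of the mild extra hypothesis you flag (or, alternatively, restricting the claim to posets such as $\pset{D}$ where it holds vacuously).
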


The Möbius function, for certain particular posets, admit a closed form. In particular, on the locally finite poset formed by $(\pset{D}, \subseteq)$, for any $B \subseteq A \in \pset{D}$, the Möbius function writes (see \cite{Rota1964}, Corollary p.345):
$$\mu(B, A) = (-1)^{\diffCard{A}{B}}.$$
It comes from the fact that the poset $(\pset{D}, \subseteq)$ is a Boolean lattice \cite{Rota1964}. It leads to the following result (see \cite{CombiRota2012} Section~3.1.1 p.108).
\begin{coro}[Möbius inversion formula on power-sets]
Let $\varphi$ and $\psi$ be functions from $\pset{D}$ to $\A$. Then the following equivalence holds:
$$\varphi_A = \sum_{B\subseteq A} \psi_B, \quad \forall A \in \pset{D} \quad \iff \quad \psi_A = \sum_{B \subseteq A} (-1)^{\diffCard{A}{B}} \varphi_B, \quad \forall A \in \pset{D}.$$
\label{coro:mobInvGen}
\end{coro}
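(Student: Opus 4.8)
The plan is to specialize the Möbius inversion formula on locally finite posets (the preceding theorem) to the poset $(\pset{D},\subseteq)$, and then to substitute the explicit expression of its Möbius function.

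First, recall from the preliminaries that $(\pset{D},\subseteq)$ is a locally finite poset (it is in fact finite), so the hypotheses of the general theorem are satisfied with $\calS = \pset{D}$ and $\leq$ taken to be set inclusion $\subseteq$. Identifying a function $\varphi : \pset{D} \to \A$ with the family $(\varphi_A)_{A \in \pset{D}}$, i.e. writing $\varphi_A := \varphi(A)$, and likewise for $\psi$, the theorem yields immediately the equivalence
$$\varphi_A = \sum_{B \subseteq A}\psi_B \quad \forall A \in \pset{D} \quad \iff \quad \psi_A = \sum_{B \subseteq A}\varphi_B\, \mu(B,A) \quad \forall A \in \pset{D},$$
where $\mu$ is the Möbius function of $(\pset{D},\subseteq)$.

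It then remains to show that $\mu(B,A) = (-1)^{\diffCard{A}{B}}$ for every $B \subseteq A$. I would prove this by induction on $n := \Card{A \setminus B}$, which equals $\diffCard{A}{B}$ since $B \subseteq A$, using the recursive definition of $\mu$ from the preliminaries. The base case $n=0$ is $A = B$, where $\mu(A,A) = 1 = (-1)^0$. For the inductive step, the recursion reads $\mu(B,A) = -\sum_{B \subseteq C \subsetneq A}\mu(B,C)$; by the induction hypothesis $\mu(B,C) = (-1)^{\Card{C}-\Card{B}}$ for each such $C$, and regrouping the sets $C$ with $B \subseteq C \subsetneq A$ according to $\Card{C} = \Card{B} + j$ (there being $\binom{n}{j}$ of them, for $0 \le j \le n-1$) gives $\mu(B,A) = -\sum_{j=0}^{n-1}\binom{n}{j}(-1)^j$. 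Since $\sum_{j=0}^{n}\binom{n}{j}(-1)^j = 0$, the truncated sum equals $-(-1)^n$, whence $\mu(B,A) = (-1)^n = (-1)^{\diffCard{A}{B}}$. Alternatively, one may simply invoke the cited fact that the Boolean lattice $(\pset{D},\subseteq)$ has this Möbius function.

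Substituting $\mu(B,A) = (-1)^{\diffCard{A}{B}}$ into the equivalence obtained above yields exactly the claimed statement. There is no genuine difficulty here, the result being a direct corollary of the general theorem; the only step requiring a short argument is the closed-form evaluation of the Möbius function, which the binomial-sum induction above handles.
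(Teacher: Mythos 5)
Your proposal is correct and follows the same route as the paper: specialize the general Möbius inversion theorem to the finite poset $(\pset{D},\subseteq)$ and plug in the closed form $\mu(B,A)=(-1)^{\diffCard{A}{B}}$, which the paper simply cites from Rota while you supply a short (and valid) binomial-sum induction for it. No gaps; the extra induction is a harmless self-contained substitute for the citation.
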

\cororef{coro:mobInvGen} can be seen as a generalization of the \emph{Inclusion-Exclusion principle}. It allows to decompose additive functions $f : \calS \rightarrow \mathbb{R}$ where $\calS$ is an algebra of sets. It is widely used in probability theory. However, in light of this generalization, two main differences arise when compared to the classical principle:
\begin{itemize}
    \item Both statements in \cororef{coro:mobInvGen} are equivalent, whereas for the classical Inclusion-Exclusion principle, the left-hand statement only implies the right-left statement;
    \item The functions to be decomposed are not restricted to be additive, and valued in $\R$ (or even a field) anymore, but they must only be valued in a commutative ring with identity (or even, in some cases, an Abelian group).
\end{itemize}
The consequences of these differences allow to easily define \emph{coalitional QoI decompositions}, for a broad range of QoIs, and with minimal assumptions on the model $G$ and the distribution $P_X$ of its inputs.

%%%%%%%%%%%%%%%%%%%%%%%%%%%%%%%%%%%%%%%%%%%%%%%%%%%%%%%%%%%%%%%%%%%%%%%%%%
%% COALITIONAL DECOMPOSITIONS
\section{Coalitional decompositions of QoIs}

A \emph{coalition} of inputs indexed by $A \in \pset{D}$ refers to the subset of $E_A$-valued random inputs $X_A$. In its essence, a coalitional QoI decomposition amounts to writing a QoI as a sum of terms indexed by a set $A \in \pset{D}$, relative to each subset $X_A$ of inputs. They can be formally defined as follows.
\begin{defi}[Coalitional decompositions]
Let $G \in \mathcal{M}(E)$ be a model with $E$-valued random inputs $X \sim P_X \in \proba(E)$, and $\phi_{P_X}(G)$ be an $\A$-valued QoI. One says that a QoI $\phi_{P_X}(G)$ admits a coalitional decomposition if it can be written as:
$$\phi_{P_X}(G) = \sum_{A \in \pset{D}} \psi_A $$
where $\psi : \pset{D} \rightarrow \A$. The right hand side is referred to as the coalitional decomposition of $\phi_{P_X}(G)$.
\label{dfi:coalDecomp}
\end{defi}

\subsection{Main result}
It is important to note that there exists infinitely many coalitional decompositions for a fixed QoI. However, the following result leverages \cororef{coro:mobInvGen} in order to characterize a particular class of coalitional decompositions. Sufficient conditions on $\psi$ are given in order to ensure a coalitional QoI decomposition. Notice that it remarkably involves very limited assumptions on the probabilistic structure $P_X$ and the model $G$.

\begin{lme}[Möbius decomposition]
Let $G \in \mathcal{M}$ a model with $E$-valued random inputs $X \sim P_X \in \proba(E)$. Let $\phi_{P_X}(G)$ be a QoI on $G$. Let $\varphi : \mathcal{P}(D) \rightarrow \A$ be a set function such that:
$$\varphi_D = \phi_{P_X}(G).$$
and $\forall A \in \pset{D}, \varphi_A$ is well-defined. Then, $\phi_{P_X}(G)$ admits the following coalitional decomposition:
\begin{equation}
    \phi_{P_X}(G) = \sum_{A \in \mathcal{P}(D)} \psi_A,
    \label{eq:mobPsi}
\end{equation}
where, $\forall A \subseteq D$,
\begin{equation}
    \psi_A = \sum_{B \subseteq A} (-1)^{\diffCard{A}{B}} \varphi_B.
    \label{eq:mobPhi}
\end{equation}
\label{thm:mobDecomp}
\end{lme}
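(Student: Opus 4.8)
The plan is to observe that the defining formula \eqref{eq:mobPhi} for $\psi$ is \emph{verbatim} one side of the Möbius inversion equivalence on power-sets (\cororef{coro:mobInvGen}), so the whole statement follows by reading that equivalence in the convenient direction and then specialising to $A = D$. There is essentially no analytic content: no integrability, continuity, or independence property of $G$ or $P_X$ enters, because once $\varphi$ is assumed well-defined the argument is purely combinatorial.

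First I would check that the hypotheses of \cororef{coro:mobInvGen} are met. The pair $(\pset{D}, \subseteq)$ is a locally finite poset, as already noted at the end of the combinatorics subsection, and $\A$ is a commutative ring with identity, hence has additive inverses; consequently the signed sums $\sum_{B \subseteq A} (-1)^{\diffCard{A}{B}} \varphi_B$ are well-defined elements of $\A$ for every $A$, using that each $\varphi_A \in \A$ is well-defined by assumption. Thus $\varphi$ and $\psi$ are bona fide set functions $\pset{D} \to \A$ to which the corollary applies.

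Next, \eqref{eq:mobPhi} says precisely that $\psi_A = \sum_{B \subseteq A} (-1)^{\diffCard{A}{B}} \varphi_B$ for all $A \in \pset{D}$, which is the right-hand statement in the equivalence of \cororef{coro:mobInvGen}. Invoking the ``right implies left'' direction of that equivalence gives $\varphi_A = \sum_{B \subseteq A} \psi_B$ for every $A \in \pset{D}$. I would then specialise this identity to $A = D$: since every subset of $D$ lies in $\pset{D}$, this reads $\varphi_D = \sum_{B \subseteq D} \psi_B = \sum_{A \in \pset{D}} \psi_A$. By hypothesis $\varphi_D = \phi_{P_X}(G)$, so we obtain \eqref{eq:mobPsi}, which is exactly a coalitional decomposition in the sense of \defref{dfi:coalDecomp}.

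The only point requiring any care — and it is minor — is the bookkeeping of which implication of the equivalence is used: we are \emph{given} $\psi$ through the Möbius formula and must \emph{deduce} the summation identity for $\varphi$, i.e. we use the right-to-left implication, not the usual left-to-right ``Hoeffding-Sobol'' reading. Beyond that, it is worth remarking explicitly in the proof that the result does not use any structure of $P_X$ or $G$ apart from the well-definedness of $\varphi_A$ for all $A$; this is what makes the statement hold under the weak assumptions advertised in the introduction.
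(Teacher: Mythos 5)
Your proposal is correct and follows essentially the same route as the paper's own proof: define $\psi$ by the Möbius formula, invoke the right-to-left direction of \cororef{coro:mobInvGen} to recover $\varphi_A = \sum_{B \subseteq A} \psi_B$, and specialise to $A = D$ using $\varphi_D = \phi_{P_X}(G)$. Your extra remarks on the well-definedness of the signed sums in $\A$ and on which implication is used are accurate but do not change the argument.
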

This particular characterization of the coalitional decomposition of $\phi_{P_X}(G)$ is referred to as its Möbius decomposition.
\begin{proof}[Proof of \lmeref{thm:mobDecomp}]
Since, by assumption, $\varphi_A$ is well defined $\forall A \in \pset{D}$, let:
$$ \psi_A = \sum_{B \subseteq A} (-1)^{\diffCard{A}{B}} \varphi_B, \quad \forall A \in \pset{D}.$$
By \cororef{coro:mobInvGen}, it is equivalent to:
$$\varphi_A = \sum_{B \subseteq A} \psi_B, \quad \forall A \in \pset{D},$$
and, in particular:
$$\varphi_D = \phi_{P_X}(G) = \sum_{A \in \pset{D}} \psi_A.$$
\end{proof}

One can notice from \lmeref{thm:mobDecomp} that, defining a coalitional QoI decomposition amounts to choosing a set function $\varphi$ such that $\varphi_D = \phi_{P_X}(G)$, with very limited assumptions on both $G$ and the inputs' probability structure $P_X$ (i.e., the well-definition of $\varphi_A$, $\forall A \in \pset{D}$).

\subsection{Desirable coalitional decomposition properties}
Some Möbius decompositions can be trivial: take, for instance, $\varphi_A = \phi_{P_X}(G), \forall A \in \pset{D}$. In those cases, even if the decomposition hold, $\psi_A$ is not \emph{meaningful}, in the sense that it is not related to the subset of inputs $X_A$. Hence, to ensure the meaningfulness of a Möbius decompositions, some properties can be desired, as detailed in the following.
\begin{defi}[Gradual Möbius decomposition]
Let $G \in \mathcal{M}(E)$ be a model with $E$-valued random inputs $X \sim P_X \in \proba(E)$, and let $\phi_{P_X}(G)$ be an $\A$-valued QoI on $G$. Assume that this QoI admits a Möbius decomposition (i.e., it can be written as \eqref{eq:mobPsi} with \eqref{eq:mobPhi}). If $\varphi$ can be written, for any $A \in \pset{D}$, as:
$$\varphi_A = \phi_{P_X}(f_A),$$
where $f_A \in \mathcal{M}(E_A)$ is a $Z$-valued $E_A$-measurable function, then the decomposition is said to be gradual.
\end{defi}
The term gradual refers to the functions $f_A$, whose input dimension is increasing with the cardinal of $A \in \pset{D}$. It ensures that each $\varphi_A$ is somewhat linked to the coalition of inputs $X_A \sim P_{X_A}$ through the functions $f_A$, and subsequently, $\psi_A$ as well.

While graduality ensures a link between each $\psi_A$ and the coalitions of inputs $X_B$ for $B \subseteq A$, one can also be interested in their subsequent interpretation. In the particular case where $\A = \R$, and where the QoI is not random, one natural desirable property would be to interpret this decomposition as shares of QoI.
\begin{defi}[Fractional Möbius decomposition]
Let $G \in \mathcal{M}(E)$ be a model with $E$-valued random inputs $X \sim P_X$, and let $\phi_{P_X}(G)$ be a non-random, non-zero $\R$-valued QoI. Assume that $\phi_{P_X}(G)$ admits a Möbius decomposition (i.e., it can be written as \eqref{eq:mobPsi} with \eqref{eq:mobPhi}). If, $\forall A \in \pset{D}$:
$$\text{sign}\left(\psi_A\right) = \text{sign}\left( \phi_{P_X}(G)\right),$$
Then the Möbius decomposition of $\phi_{P_X}(G)$ is said to be fractional.
\end{defi}
If a Möbius decomposition of $\psi_{P_X}(G)$ is fractional, it ensures that the ratios,
\begin{equation}
     \frac{\psi_A}{\phi_{P_X}(G)},\quad \forall A \in \pset{D},
     \label{eq:ratios}
\end{equation}
are in $[0,1]$, and subsequently that,
$$\sum_{A \in \pset{D}} \frac{\psi_A}{\phi_{P_X}(G)} =1.$$
Essentially, it means that these ratios can be interpreted as shares of QoI attributed to each possible coalition of inputs.

\section{Möbius decompositions for global sensitivity analysis}

The Möbius decompositions defined in \lmeref{thm:mobDecomp} are especially useful in the context of global sensitivity analysis \cite{DaVeiga2021}. In particular, this result allows to:
\begin{itemize}
    \item Show that some existing QoI decompositions proposed in the literature are Möbius decompositions, and actually hold with weaker assumptions on $P_X$ and $G$;
    \item Define decompositions of QoIs being valued in commutative ring with identity other than $\R$.
\end{itemize}

\subsection{Variance decomposition}
Let $X\sim P_X$ be $E$-valued random inputs of an $\R$-valued model $G \in L^2(P_X; \R) \subseteq \mathcal{M}(E)$, and let:
$$\phi_{P_X}(G) = \V{G(X)},$$
be the $\R$-valued QoI, i.e., the variance of the random output $G(X)$.
\begin{proposition}[Variance decomposition]
Let, $\forall A \in \pset{D}$:
$$f_A(X_A) = \E{G(X) \mid X_A},$$
and,
\begin{equation*}
    \varphi_A = \phi_{P_X}(f_A) = \V{\E{G(X) \mid X_A}}.
\end{equation*}
Then, $\V{G(X)}$ admits the following gradual Möbius decomposition:
$$\V{G(X)} = \sum_{A \in \pset{D}} \psi_A,$$
where, $\forall A \in \pset{D}$,
$$ \psi_A = \sum_{B \subseteq A} (-1)^{\diffCard{A}{B}}\V{\E{G(X) \mid X_B}}.$$
Additionally, if the inputs are independent (i.e., $P_X = \prod_{i=1}^d P_{X_i}$), then this decomposition is also fractional.
\label{prop:varDecomp}
\end{proposition}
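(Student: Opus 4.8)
The plan is to first verify that the proposed $\varphi$ meets the hypotheses of \lmeref{thm:mobDecomp} and is gradual, and then to establish the sign condition of the fractional property under independence by reconstructing the Hoeffding--Sobol' functional ANOVA from the Möbius machinery.

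First I would check graduality and well-definedness. Since $G \in L^2(P_X;\R)$, for each $A \in \pset{D}$ the conditional expectation $\E{G(X) \mid X_A}$ admits an $\mathcal{E}_A$-measurable version $f_A \in \mathcal{M}(E_A)$, which lies in $L^2(P_{X_A};\R)$ by the conditional Jensen inequality; hence $\varphi_A = \V{f_A(X_A)}$ is well defined and finite. Taking $A = D$ and using $\E{G(X) \mid X_D} = G(X)$ $P_X$-a.s. gives $\varphi_D = \V{G(X)} = \phi_{P_X}(G)$. So \lmeref{thm:mobDecomp} applies and produces exactly the announced $\psi_A = \sum_{B \subseteq A}(-1)^{\diffCard{A}{B}}\V{\E{G(X)\mid X_B}}$, and graduality is immediate since $\varphi_A = \phi_{P_X}(f_A)$ with $f_A \in \mathcal{M}(E_A)$.

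For the fractional property, I would assume $P_X = \prod_{i=1}^d P_{X_i}$ and define $G_A \in L^2(P_{X_A};\R)$ by $G_\emptyset = \E{G(X)}$ and $G_A(X_A) = \E{G(X)\mid X_A} - \sum_{B \subsetneq A} G_B(X_B)$ for $A \neq \emptyset$; by \cororef{coro:mobInvGen} this is equivalent to $G_A(X_A) = \sum_{B \subseteq A}(-1)^{\diffCard{A}{B}}\E{G(X)\mid X_B}$ together with $\E{G(X)\mid X_A} = \sum_{B \subseteq A}G_B(X_B)$. The key step is the identity $\E{G_A(X_A)\mid X_{A\setminus\{i\}}} = 0$ for every nonempty $A$ and every $i \in A$: one splits the defining sum according to whether $i \in B$, rewrites it as $\sum_{C\subseteq A\setminus\{i\}}(-1)^{\diffCard{A}{C}}\bigl(\E{G(X)\mid X_C} - \E{G(X)\mid X_{C\cup\{i\}}}\bigr)$, and conditions on $X_{A\setminus\{i\}}$, using independence of $X_i$ from the remaining coordinates (and the chosen regular conditional distributions) to obtain $\E{\E{G(X)\mid X_{C\cup\{i\}}}\mid X_{A\setminus\{i\}}} = \E{G(X)\mid X_C}$, so each bracket vanishes in conditional expectation. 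From this, $\E{G_A(X_A)} = 0$ for $A \neq \emptyset$, and for $A \neq A'$, choosing $i$ in the symmetric difference and conditioning the product $G_A(X_A)G_{A'}(X_{A'})$ on the coordinates in $(A\cup A')\setminus\{i\}$ — which already contain those of the smaller-indexed factor — shows $\Cov{G_A(X_A)}{G_{A'}(X_{A'})} = 0$; hence for each $A$ the family $\{G_B(X_B)\}_{B\subseteq A}$ is pairwise uncorrelated. The Pythagorean identity then gives $\varphi_A = \V{\E{G(X)\mid X_A}} = \sum_{B\subseteq A}\V{G_B(X_B)}$, and comparing with $\varphi_A = \sum_{B\subseteq A}\psi_B$ and invoking the uniqueness in \cororef{coro:mobInvGen} yields $\psi_A = \V{G_A(X_A)} \geq 0$ for all $A \in \pset{D}$. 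Since the QoI $\V{G(X)}$ is nonzero, hence positive, $\text{sign}(\psi_A) = \text{sign}(\phi_{P_X}(G))$ for every $A$, i.e., the decomposition is fractional.

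The main obstacle is the vanishing of the conditional expectations of the ANOVA summands and the orthogonality it entails: this is the only place where independence is genuinely used, and because the $X_i$ are valued in general Polish spaces rather than Euclidean ones, the argument has to be run through regular conditional distributions and a Fubini-type interchange rather than through joint densities. Once that is in place, everything else is routine bookkeeping with \cororef{coro:mobInvGen}.
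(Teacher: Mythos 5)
Your proof is correct, and for the first half it coincides with the paper's: check that $G \in L^2(P_X;\R)$ makes every $\varphi_A = \V{\E{G(X)\mid X_A}}$ finite, note $\varphi_D = \V{G(X)}$, and invoke \lmeref{thm:mobDecomp}; graduality is read off from $\varphi_A = \phi_{P_X}(f_A)$ with $f_A \in \mathcal{M}(E_A)$, exactly as in the paper. Where you diverge is the fractional part: the paper simply cites the literature (``it is well known that $\psi_A \geq 0$ under independence,'' with a reference) and concludes, whereas you reconstruct the Hoeffding--Sobol' functional ANOVA from scratch --- defining $G_A(X_A) = \sum_{B\subseteq A}(-1)^{\diffCard{A}{B}}\E{G(X)\mid X_B}$, proving $\E{G_A(X_A)\mid X_{A\setminus\{i\}}} = 0$ via the product structure of $P_X$, deducing pairwise orthogonality, and then identifying $\psi_A = \V{G_A(X_A)} \geq 0$ through the Pythagorean identity and the uniqueness in \cororef{coro:mobInvGen}. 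Your route is self-contained and makes explicit the one place where independence is genuinely needed (and why Polish-valued inputs force the argument through regular conditional distributions rather than densities), at the cost of length; the paper's route is shorter but outsources precisely this nonnegativity to a citation. One cosmetic remark, which applies equally to the paper's own proof: since $\psi_\emptyset = \V{\E{G(X)}} = 0$ (and other $\psi_A$ may vanish), the conclusion is really $\psi_A \geq 0$ with $\phi_{P_X}(G) > 0$, so ``$\text{sign}(\psi_A) = \text{sign}(\phi_{P_X}(G))$'' holds only under the weak reading of the sign condition that the paper itself implicitly adopts (the ratios in \eqref{eq:ratios} lying in $[0,1]$); this is a defect of the definition, not of your argument.
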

\begin{proof}[Proof of \propref{prop:varDecomp}]
Since $G \in L^2(P_X; \R)$, one has that, $\forall A \in \pset{D}$:
$$\V{\E{G(X) \mid X_A}} < \infty.$$
Moreover, notice that $\varphi_D = \V{G(X)}$. Applying \lmeref{thm:mobDecomp} ultimately proves the decomposition. Whenever the inputs $X$ are independent, it is well known that $\psi_A \geq 0, \forall A \in \pset{D}$ (see \cite{davgam21}), and since $\V{G(X)} > 0$, the decomposition is thus fractional.
\end{proof}
This result is analogue to the Hoeffding-Sobol' functional analysis-of-variance (FANOVA) \cite{Hoeffding1948, Sobol2001}. Traditionally, this decomposition is the result of a functional decomposition of the model $G$ when it is assumed to be in $L^2(P_X; \R)$, into orthogonal elements, requiring the inputs to be independent. However, as shown above, this decomposition holds even when the inputs are endowed with a dependence structure. However, one can notice that input independence allow the decomposition to be fractional, and hence, in-fine, lets the ratios (i.e., as in \eqref{eq:ratios}) to be interpreted as a percentage of the output's variance attributed to each input coalition.

\subsection{Covariance decomposition}
Now, let $G : E \rightarrow \R^2$ be a model with a bivariate output. Denote $G = \begin{pmatrix}G_1\\ G_2\end{pmatrix}$ and assume that $G \in L^2(P_X; \R^2)$. Let
\begin{align*}
    \phi_{P_X}(G) &= \inprodL{G_1- \E{G_1(X)}, G_2 - \E{G_2(X)} }\\
    &= \Cov{G_1(X)}{G_2(X)},
\end{align*}
in other words, the QoI is the covariance between the two random outputs of the model.
\begin{proposition}[Covariance decomposition]
Let, $\forall A \in \pset{D}$:
$$f_A(X_A) = \begin{pmatrix}
\E{G_1(X) \mid X_A} \\
\E{G_2(X) \mid X_A}
\end{pmatrix},$$
and,
\begin{align*}
    \varphi_A = \phi_{P_X}(f_A) &= \inprodL{\E{G_1(X) \mid X_A} - \E{G_1(X)}, \E{G_2(X) \mid X_A} - \E{G_2(X)} }\\
    &=\Cov{\E{G_1(X) \mid X_A}}{\E{G_2(X) \mid X_A}}
\end{align*}
Then, $\phi_{P_X}(G)$ admits the following gradual Möbius decomposition:
$$\Cov{G_1(X)}{G_2(X)} = \sum_{A \in \pset{D}} \psi_A,$$
where, $\forall A \in \pset{D}$,
$$\psi_A = \sum_{B \subseteq A} (-1)^{\diffCard{A}{B}}\Cov{\E{G_1(X) \mid X_B}}{\E{G_2(X) \mid X_B}}.$$
\label{prop:covarDecomp}
\end{proposition}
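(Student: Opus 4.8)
The plan is to follow the same template as the proof of \propref{prop:varDecomp}: exhibit a set function $\varphi$ that is well-defined on all of $\pset{D}$ and satisfies the boundary condition $\varphi_D = \phi_{P_X}(G)$, and then invoke \lmeref{thm:mobDecomp}. The candidate is precisely the $\varphi_A$ displayed in the statement, namely $\varphi_A = \Cov{\E{G_1(X)\mid X_A}}{\E{G_2(X)\mid X_A}}$, obtained as $\phi_{P_X}(f_A)$ with $f_A\in\mathcal{M}(E_A)$ the $\R^2$-valued conditional-expectation map.

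First I would check that each $\varphi_A$ is well-defined, i.e.\ finite. Since $G\in L^2(P_X;\R^2)$, both components $G_1,G_2$ lie in $L^2(P_X;\R)$. The conditional expectation $f\mapsto\E{f(X)\mid X_A}$ is the orthogonal projection of $L^2(P_X;\R)$ onto the closed subspace of square-integrable $\mathcal{E}_A$-measurable functions, hence a contraction; therefore $\E{G_i(X)\mid X_A}\in L^2(P_X;\R)$ with $\normL{\E{G_i(X)\mid X_A}}{L^2}\le\normL{G_i}{L^2}$ for $i=1,2$, and the centered versions $\E{G_i(X)\mid X_A}-\E{G_i(X)}$ are in $L^2(P_X;\R)$ as well. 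By the Cauchy--Schwarz inequality their $L^2$ inner product --- which is exactly $\varphi_A$ --- is finite. For the boundary condition, $G_i$ is $\mathcal{E}_D$-measurable, so $\E{G_i(X)\mid X_D}=G_i(X)$ $P_X$-a.s., whence $\varphi_D=\inprodL{G_1-\E{G_1(X)},G_2-\E{G_2(X)}}=\Cov{G_1(X)}{G_2(X)}=\phi_{P_X}(G)$. With these two facts, \lmeref{thm:mobDecomp} applies verbatim and yields $\phi_{P_X}(G)=\sum_{A\in\pset{D}}\psi_A$ with $\psi_A=\sum_{B\subseteq A}(-1)^{\diffCard{A}{B}}\varphi_B$, which is the claimed formula; the decomposition is gradual by construction since $\varphi_A=\phi_{P_X}(f_A)$ with $f_A\in\mathcal{M}(E_A)$.

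There is essentially no hard step: the argument is a direct instantiation of \lmeref{thm:mobDecomp}, and the only point needing a line of justification is the finiteness of $\varphi_A$, handled by the contraction property of conditional expectation together with Cauchy--Schwarz. It is worth remarking --- in contrast with \propref{prop:varDecomp} --- that no fractional property is asserted here: a covariance may be negative, and even under input independence the polarization identity $\Cov{\cdot}{\cdot}=\tfrac14(\V{\cdot+\cdot}-\V{\cdot-\cdot})$ does not force $\text{sign}(\psi_A)=\text{sign}(\Cov{G_1(X)}{G_2(X)})$, so the ratios $\psi_A/\phi_{P_X}(G)$ need not lie in $[0,1]$.
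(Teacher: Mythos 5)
Your argument is correct and follows the paper's own route exactly: verify that $\varphi_A$ is well-defined for all $A \in \pset{D}$ (you merely spell out, via the contraction property of conditional expectation and Cauchy--Schwarz, what the paper asserts directly from $G \in L^2(P_X;\R^2)$), check the boundary condition $\varphi_D = \Cov{G_1(X)}{G_2(X)}$, and apply \lmeref{thm:mobDecomp}. The added detail on finiteness and the closing remark on the absence of a fractional property are fine but not a different method.
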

\begin{proof}[Proof of \propref{prop:covarDecomp}]
Notice that since $G \in L^2(P_X; \R^2)$, $\forall A \in \pset{D}$, the quantities 
$$\Cov{\E{G_1(X) \mid X_A}}{\E{G_2(X) \mid X_A}}$$
are well defined, and that $\varphi_D =  \Cov{G_1(X)}{G_2(X)}$. Applying \lmeref{thm:mobDecomp} then leads to the gradual decomposition.
\end{proof}

Whenever $G : E \rightarrow \R^k$, for $k \in \N^*$, the two previous results can be generalized using a covariance matrix decomposition (see \cite{Gamboa2013}). Let $\mathcal{D}_k$ be the set of $(k \times k)$ symmetric semi-definite (positive or negative) matrices with non-zero entries on the diagonal, and where elements on the diagonal have the same sign. Note that the triplet $(\mathcal{D}_k, +, \circ)$ where $+$ denotes the usual element-wise matrix addition and $\circ$ denotes the element-wise (Hadamard) multiplication, forms a commutative ring with identity (if all the entries were non-zero, it would be a field since the Hadamard inverse would always be well-defined). Let $\Sigma$ be the covariance matrix of the output $G(X) = \left(G_1(X), \dots, G_k(X)\right)^\top$, defined element-wise, for $i,j = 1, \dots, k$:
$$\Sigma_{ij} =  \Cov{G_i(X)}{G_j(X)}.$$
$\Sigma$ is necessarily semi-definite positive (since it is a covariance matrix) and is in $\mathcal{D}_k$ under the assumption that each element of the output is not constant almost surely. It is then a $\mathcal{D}_k$-valued QoI, and can be decomposed as follows:
\begin{proposition}[Covariance matrix decomposition]
Let, $\forall A \in \pset{D}$, the matrices $\Sigma^A \in \mathcal{D}_k$ be defined element-wise as:
$$\Sigma^A_{i,j} = \Cov{\E{G_i(X) \mid X_A}}{\E{G_j(X) \mid X_A}}, \quad i,j=1,\dots,k.$$
Then, $\Sigma$ admits the following gradual Möbius decomposition:
$$\Sigma = \sum_{A \in \pset{D}} \psi_A,$$
where, $\forall A \in \pset{D}$,
$$\psi_A = \sum_{B \subseteq A} (-1)^{\diffCard{A}{B}}\Sigma^B.$$
\label{prop:covarmatDecomp}
\end{proposition}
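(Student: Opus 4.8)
The plan is to verify the two hypotheses of \lmeref{thm:mobDecomp} for the $\mathcal{D}_k$-valued QoI $\phi_{P_X}(G) = \Sigma$ and then read off the conclusion, exactly as in the proof of \propref{prop:covarDecomp} but at the level of matrices rather than pairs. First I would take $\varphi_A = \Sigma^A$ for every $A \in \pset{D}$, with graduality witnessed by $f_A(X_A) = \left(\E{G_1(X)\mid X_A}, \dots, \E{G_k(X)\mid X_A}\right)^\top$: this $f_A$ is $\mathcal{E}_A$-measurable and $\R^k$-valued, so $f_A \in \mathcal{M}(E_A)$, and $\varphi_A = \phi_{P_X}(f_A) = \Sigma^A$ holds by the definition of $\Sigma^A$.

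Second, I would check well-definedness. Since $G \in L^2(P_X;\R^k)$, each $G_i$ lies in $L^2(P_X;\R)$, hence $\E{G_i(X)\mid X_A} \in L^2(P_{X_A};\R)$, and by Cauchy--Schwarz every entry $\Sigma^A_{i,j} = \Cov{\E{G_i(X)\mid X_A}}{\E{G_j(X)\mid X_A}}$ is finite; indeed $\Sigma^A$ is the covariance matrix of the random vector $f_A(X_A)$, so it is symmetric positive semi-definite and, under the standing non-degeneracy assumption on the outputs, lies in $\mathcal{D}_k$. The only identity that needs a word of justification is $\varphi_D = \Sigma$: since $X_D = X$ and each $G_i$ is $\mathcal{E}_D$-measurable, $\E{G_i(X)\mid X_D} = G_i(X)$ almost surely, whence $\Sigma^D_{i,j} = \Cov{G_i(X)}{G_j(X)} = \Sigma_{i,j}$.

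With these two facts established, applying \lmeref{thm:mobDecomp} yields $\Sigma = \sum_{A \in \pset{D}} \psi_A$ with $\psi_A = \sum_{B \subseteq A}(-1)^{\diffCard{A}{B}}\Sigma^B$, and the decomposition is gradual by construction of the $f_A$. I expect the only point requiring care to be the ring bookkeeping, and the cleanest way to handle it is to note that both $+$ and $\circ$ act entrywise, so that the matrix Möbius inversion is just $k^2$ simultaneous scalar inversions, each an instance of \cororef{coro:mobInvGen} with $\A = \R$ applied to the set function $A \mapsto \Sigma^A_{i,j}$. This also clarifies how to read the formula for $\psi_A$: componentwise, with $(-1)^{\diffCard{A}{B}}\Sigma^B$ denoting $\pm\Sigma^B$ in the additive group; and it makes visible that the individual $\psi_A$ need not be semi-definite (exactly as the scalar interaction terms in \propref{prop:varDecomp} may be negative when the inputs are dependent), so that the natural ambient ring for the statement is the symmetric matrices under entrywise $+$ and $\circ$, with $\mathcal{D}_k$ being the domain of the QoI $\Sigma$ itself.
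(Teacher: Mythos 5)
Your proof is correct and follows essentially the same route as the paper: check that $\Sigma^A$ is well-defined for all $A \in \pset{D}$ (via $G \in L^2(P_X;\R^k)$), note $\Sigma^D = \Sigma$, and apply \lmeref{thm:mobDecomp}; your additional remarks on graduality via $f_A$ and on the entrywise reading of the inversion merely make explicit what the paper leaves implicit.
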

\begin{proof}[Proof of \propref{prop:covarmatDecomp}]
Notice that since $G \in L^2(P_X; \R^k)$, $\Sigma^A$ is well-defined $\forall A \in \pset{D}$. Moreover, notice that $\Sigma_D = \Sigma$. Applying \lmeref{thm:mobDecomp} then leads to the decomposition.
\end{proof}
One can notice that, in that setting, decomposing $\Sigma$ amounts to performing the variance decomposition of \propref{prop:varDecomp} on the diagonal elements, and the covariance decomposition of \propref{prop:covarDecomp} on the other elements.

\subsection{Mean maximum-mean discrepancy decomposition}
Aside from moment-based quantities, more complicated QoIs can also be decomposed. Such quantities can be based on kernel embedding of the model $G$. One can refer to \cite{DaVeiga2021} for additional details. For the sake of completeness, some elements are recalled here. 

Let $G \in \mathcal{M}(E)$, be a $Z$-valued model with inputs $X \sim P_X \in \proba(E)$. Denote $P_Y$ the distribution the random output $G(X)$. Moreover, for any $A \in \pset{D}$, let the conditional distribution of $G(X)$ given $X_A$ be denoted by $P_{Y \mid X_A}$. Let $k : Z \times Z \rightarrow \R$ be a kernel associated with a reproducing kernel Hilbert space (RKHS) $\mathcal{H}$ \cite{Berlinet2004}. Let:
$$\mu_G(t) = \int_Z \krnl{z}{t} dP_Y(z) = \int_E \krnl{G(z)}{t} dP_X(z) = \E{\krnl{G(X)}{t}}$$
denotes the \emph{kernel mean embedding} of $G(X)$. Moreover, denote:
$$\mu_{G \mid X}(t) = \E{\krnl{G(X)}{t} \mid X} = \krnl{G(X)}{t}.$$
The \emph{maximum-mean discrepancy} between $P_Y$ and $P_{Y\mid X}$ is given by:
\begin{align*}
    \text{MMD}^2(P_Y, P_{Y \mid X}) &= \normL{\mu_G - \mu_{G \mid X}}{\mathcal{H}}^2\\
    &= \E{\mu_G(G(X))} + \mu_{G \mid X}(G(X)) -2 \E{\krnl{G(X)}{G(X)}}
\end{align*}
One is interested in the QoI defined as the mean MMD, i.e.,
\begin{align*}
    S^{\text{MMD}} &:= \E{\text{MMD}^2(P_Y, P_{Y \mid X})}\\
    &= \E{\mu_G(G(X))} - \E{\krnl{G(X)}{G(X)}}
\end{align*}

\begin{proposition}
Let $X \sim P_X$ be $E$-valued random inputs of a model $G : E \rightarrow Z$. Let $k : Z \times Z \rightarrow \R$ be the reproducing kernel of a RKHS $\mathcal{H}$. Assume that $k$ is such that, $\forall A \in \pset{D}$:
$$\SMMD_A := \mathbb{E}_{X_A}\left[\text{MMD}^2 (P_Y, P_{Y \mid X_A})\right] < \infty.$$
Then, $S^{\text{MMD}}$ admits the following Möbius decomposition:
$$\SMMD = \sum_{A \in \pset{D}} \psi_A,$$
where, $\forall A \in \pset{D}$,
$$\psi_A = \sum_{B \subseteq A} (-1)^{\diffCard{A}{B}}\SMMD_B $$
\end{proposition}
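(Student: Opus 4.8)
The plan is to apply \lmeref{thm:mobDecomp} verbatim, exactly as in the proofs of \propref{prop:varDecomp} and \propref{prop:covarDecomp}, with the commutative ring $\A = \R$ and the set function $\varphi : \pset{D} \to \R$ defined by $\varphi_A := \SMMD_A = \mathbb{E}_{X_A}\!\left[\text{MMD}^2(P_Y, P_{Y\mid X_A})\right]$ for every $A \in \pset{D}$. Two things then have to be checked: that each $\varphi_A$ is a well-defined element of $\R$, and that $\varphi_D = \phi_{P_X}(G) = \SMMD$.

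The first point is precisely the standing hypothesis: $\SMMD_A < \infty$ for all $A$, so $\varphi_A \in \R$ is well-defined. It is also worth recording the boundary behaviour, namely that $P_{Y\mid X_\emptyset} = P_Y$, whence $\varphi_\emptyset = \SMMD_\emptyset = \text{MMD}^2(P_Y, P_Y) = 0$ and correspondingly $\psi_\emptyset = 0$.

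The second point is the only step requiring a small verification, and it is where I would be most careful. For $A = D$ the conditioning is on the whole input vector $X_D = X$; since $Y = G(X)$ is a deterministic measurable image of $X$, the regular conditional law $P_{Y \mid X_D} = P_{Y\mid X}$ is the Dirac mass $\delta_{G(X)}$, which is exactly the conditional distribution appearing in the definition of $\SMMD$ (consistent with the identity $\mu_{G\mid X}(t) = \krnl{G(X)}{t}$ recalled just above). Consequently $\text{MMD}^2(P_Y, P_{Y\mid X_D}) = \text{MMD}^2(P_Y, P_{Y\mid X})$ for $P_X$-almost every realisation, and integrating over $X_D = X$ yields $\varphi_D = \mathbb{E}_X\!\left[\text{MMD}^2(P_Y, P_{Y\mid X})\right] = \SMMD$.

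With $\varphi_D = \SMMD$ and $\varphi_A$ well-defined for every $A \in \pset{D}$, \lmeref{thm:mobDecomp} applies and delivers the coalitional decomposition $\SMMD = \sum_{A\in\pset{D}}\psi_A$ with $\psi_A = \sum_{B\subseteq A}(-1)^{\diffCard{A}{B}}\SMMD_B$, which is the assertion. The genuinely restrictive ingredient --- integrability of the MMD terms, i.e.\ a condition on the kernel $k$ --- is assumed rather than proved; beyond that the argument is pure bookkeeping, the main subtlety being merely to recognise that the $A = D$ term of the family $(\SMMD_A)$ coincides with the target QoI. One could further ask whether this Möbius decomposition is \emph{gradual}; unlike the variance and covariance cases, $\SMMD_A$ is not obviously of the form $\phi_{P_X}(f_A)$ for an $E_A$-measurable $f_A$ without extra assumptions (such as independent inputs), but graduality is not part of what the proposition claims.
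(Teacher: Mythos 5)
Your proposal is correct and follows exactly the paper's route: take $\varphi_A = \SMMD_A$, note well-definedness from the hypothesis and $\varphi_D = \SMMD$, then invoke \lmeref{thm:mobDecomp}. Your extra verification that $P_{Y\mid X_D}=\delta_{G(X)}$ so that the $A=D$ term coincides with the target QoI is a welcome detail the paper leaves implicit, but it does not change the argument.
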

\begin{proof}
By assumption, $\SMMD_A$ is well-defined $\forall A \in \pset{D}$. Moreover, notice that $\SMMD_D = \SMMD$. Applying \lmeref{thm:mobDecomp} then leads to the decomposition.
\end{proof}
This decomposition, analogous to the one presented in \cite{DaVeiga2021}, not only holds when the inputs are independent, but also when they are endowed with a dependence structure.

%%%%%%%%%%%%%%%%%%%%%%%%%%%%%%%%%%%%%%%%%%%%%%%%%%%%%%%%%%%%%%%%%%%%%%%%%%
%% CONCLUSION
\section{Discussion}

Traditionally, in the field of global sensitivity analysis, QoI decompositions are defined using a ``model-centric'' approach. It can be summarized as follows: find a suitable coalitional decomposition of the model $G$ in $L^2$, such that $\phi_{P_X}$ becomes an additive map when applied to $G$. For instance, if the QoI is the variance of the output, orthogonality of the $\psi_A$ is often desired (as defined in \defref{dfi:coalDecomp}). The new viewpoint provided by this communication adopts an ``input-centric'' approach: first define a suitable $\varphi_A$ (as in \eqref{eq:mobPhi}), such that it accurately represents the effect of $X_A$, and then define a suitable decomposition using the reverse implication of the Möbius inversion formula. This approach is analogous to the field of cooperative game theory \cite{Bilbao2000}, where $\varphi$ represents the value function of a cooperative game, and $\psi_A$ are none other than its Harsanyi dividends \cite{Harsanyi1963}. The understanding and possible combination of both approaches to find theoretically suitable candidates for $\varphi$ is the subject of ongoing research.

%%%%%%%%%%%%%%%%%%%%%%%%%%%%%%%%%%%%%%%%%%%%%%%%%%%%%%%%%%%%%%%%%%%%%%%%%%
%% REFERENCES
\bibliographystyle{plain}
\bibliography{references}

\end{document}